\tikzset{node distance=2cm, auto}
\DeclareMathOperator{\Hm}{H}
\DeclareMathOperator{\im}{im}
\DeclareMathOperator{\Ho}{Ho}
\DeclareMathOperator{\Cob}{Cob}
\DeclareMathOperator{\PCob}{Pre-Cob}
\DeclareMathOperator{\SL}{SL}
\DeclareMathOperator{\Endo}{End}
\DeclareMathOperator{\Diff}{Diff}
\DeclareMathOperator{\Kom}{Kom}
\DeclareMathOperator{\Mat}{Mat}
\DeclareMathOperator{\Univ}{U}
\DeclareMathOperator{\SU}{SU}
\DeclareMathOperator{\SO}{SO}
\DeclareMathOperator{\Hom}{Hom}
\newcommand{\conj}[1]{\quad\textnormal{ #1 }\quad}
\newcommand{\conjj}[1]{\ \textnormal{ #1 }\ }
\newcommand{\inp}[1]{\ensuremath{\langle #1 \rangle}}
\newcommand{\quantsl}[0]{\Univ_q\mathfrak{sl}(2)}
\def\imod#1{\allowbreak\mkern2.5mu({\operator@font mod}\,#1)}
\renewcommand{\a}{\alpha}
\newcommand{\vp}{\varphi}
\newcommand{\ott}{\otimes}
\newcommand{\s}{\sigma}
\newcommand{\ga}{\gamma}
\renewcommand{\d}{\delta}
\newcommand{\CPic}[1]{
\begin{minipage}{.45in}
\includegraphics[scale=.75]{#1}
\end{minipage}
}
\newcommand{\bt}{\mathbf{t}}
\newcommand{\rZ}{\mathrm{Z}}
\newcommand{\FF}{\mathbb{F}}
\newcommand{\ZZ}{\mathbb{Z}}
\newcommand{\eA}{\EuScript{A}}
\newcommand{\eC}{\EuScript{C}}
\newcommand{\eD}{\EuScript{D}}
\newcommand{\eE}{\EuScript{E}}
\newcommand{\eK}{\EuScript{K}}
\newcommand{\eR}{\EuScript{R}}
\newcommand{\eS}{\EuScript{S}}
\newcommand{\eX}{\EuScript{X}}
\newcommand{\eZ}{\EuScript{Z}}
\theoremstyle{plain}
\newtheorem{theorem}[subsection]{Theorem}
\newtheorem{proposition}[subsection]{Proposition}
\newtheorem{corollary}[subsection]{Corollary}
\theoremstyle{remark}
\newtheorem{remark}[subsection]{Remark}
\theoremstyle{definition}
\newtheorem{example}[subsection]{Example}
\newtheorem{defn}[subsection]{Definition}
\newtheorem{definition}[subsection]{Definition}
\numberwithin{equation}{section}
\let\emptyset\varnothing
\DeclareMathOperator{\gVect}{gVect}
\DeclareMathOperator{\Config}{Config}
\DeclareMathOperator{\PSL}{PSL}
\DeclareMathOperator{\B}{B}
\DeclareMathOperator{\Id}{Id}
\DeclareMathOperator{\op}{op}
\def\imod#1{\allowbreak\mkern2.5mu({\operator@font mod}\,#1)}
\newcommand{\hohf}{\Ho_{\frac{1}{2}}(\eA)}
\newcommand{\hfcx}{\Kom_{\frac{1}{2}}^*(n)}
\newcommand{\hfcxA}{\Kom_{\frac{1}{2}}^*(\eA)}
\newcommand{\sls}{/}
\renewcommand{\bt}[0]{\B_3}
\newcommand{\pslz}[0]{\PSL(2,\ZZ)}
\newcommand{\slz}[0]{\SL(2,\ZZ)}
\newcommand{\sh}[0]{\mathfrak{s}}
\newcommand{\ptwT}[0]{\frac{1}{2} k^2}
\newcommand{\ptwQ}[0]{k}
\newcommand{\ptwTsq}[0]{k^2}
\newcommand{\ptwQsq}[0]{2k}
\newcommand{\ntwT}[0]{-\frac{1}{2} k^2}
\newcommand{\ntwQ}[0]{-k}
\newcommand{\pSHIFT}[0]{t^{\ptwT} q^{\ptwQ}}
\newcommand{\nSHIFT}[0]{t^{\ntwT} q^{\ntwQ}}
\newcommand{\kK}{\eK}
\newcommand{\kC}{\eC}
\newcommand{\kP}{\eE}
\newcommand{\kR}{\eR}
\newcommand{\kZ}{\eZ}
\newcommand{\kX}{\eX}
\theoremstyle{plain}
\newtheorem{property}{Property}
\begin{document}

\title[Categorical Representations of The Modular Group]{Categorical Representations of The Modular Group}
\author[Benjamin Cooper]{Benjamin Cooper}
\address{Universit\"{a}t Z\"{u}rich, Winterthurerstrasse 190, CH-8057 Z\"{u}rich}
\email{benjamin.cooper\char 64 math.uzh.ch}

\begin{abstract}
  Actions of the modular group on categories are constructed.  A
  hyperelliptic involution is used to convert the braid representations
  underlying Khovanov homology to representations of the modular group.
\end{abstract}

\maketitle

\section{Introduction}\label{introsec}
The study of group actions on categories is a rich topic which arises in
many areas of mathematics. Such group actions appear when topological field
theories are present. An $n$-dimensional topological field theory $\rZ$
associates to each $(n-2)$-manifold $\Sigma$ a category $\rZ(\Sigma)$ upon
which the mapping class group $\Gamma(\Sigma)$. In
particular, there are equivalences:
$$ F_g : \rZ(\Sigma) \to \rZ(\Sigma) \conj{ for each } g \in \Gamma(\Sigma).$$
When $n=4$, one expects many examples of mapping class groups
$\Gamma(\Sigma)$ acting on categories $\rZ(\Sigma)$, see \cite{KH,
  Roberts}. While there are examples of categorical braid group actions in
the literature \cite{KoSei, Rou, ThoSei, Webster}, there are very few
examples for surfaces of genus greater than zero; although, see \cite{LOT}.

In this paper we present a means by which the categories underlying the
Khovanov homology of knots and links can be used to produce categories which
are representations of the modular group $\slz$ (the mapping class group of
the torus). A hyperelliptic involution is used to reduce the mapping class
group of the torus to the mapping class group of the four punctured
sphere. This identification allows us to construct several families of
modular group representations from braid group representations.

The theory of modular functors shows that it is possible to construct
representations for surfaces of arbitrary genus when given a family of braid
group representations and a representation of the modular group satisfying
some compatibility conditions \cite{BaK, MS, Walker}.  In this sense, the
special case of $\slz$ actions is an important component of the most general
case.

\subsection*{Organization}
In Section \ref{algsec}, definitions related to knot homology and group
actions are recalled. In Section \ref{braidsec}, we review information about
braid groups and mapping class groups. In Section \ref{gensec}, we construct
families of categories $\kR_{n,k}$ and $\kK_n$ which support $\slz$ actions.

\section{Algebraic Background}\label{algsec}

In this section we collect some algebraic background information.  We begin
by recalling the differential graded categories $\Kom^*_{\frac{1}{2}}(n)$
and the existence of certain chain complexes $P_{n,k}$ within
$\Kom^*_{\frac{1}{2}}(n)$. The definition of a group action on a category is
given and a construction for reducing the gradings is introduced.

\subsection{Cobordisms}\label{cobsec}
Dror Bar-Natan's graphical formulation \cite{DBN} of the Khovanov
categorification \cite{KH} consists of a series of categories:
$\Cob(n)$ and $\Kom(n)$. The construction in Section \ref{gensec} will take
place within categories related to $\Kom(n)$. Definitions are reviewed
below. They are the same as those appearing in \cite{CH, CK} with the
exception that we adopt the Kauffman bracket grading convention from
Rozansky \cite{R3}.

\subsubsection{Categories of cobordisms}
Let $R$ be a field.

\begin{definition}\label{cobdef}
  The category of {\em $\mathfrak{sl}_2$-cobordisms} with $2n$ boundary points
  will be denoted by $\Cob(n)$.
\end{definition}

In more detail, there is a pre-additive category $\PCob(n)$ whose objects
are formally $\ZZ$-graded diagrams. Each diagram is given by an isotopy
class of 1-manifolds embedded in a disk $D^2$ relative to $2n$ boundary
points.  There is a functor
$$q : \PCob(n) \to \PCob(n)$$ 
which increases the grading by one. The morphisms of the
category $\PCob(n)$ are given by $R$-linear combinations of isotopy classes
of orientable cobordisms embedded in $D^2 \times I$, see \cite{DBN}.

The \emph{internal degree} of a cobordism $C : q^i A \to q^j B$ is the sum
of its topological degree and its $q$-degree:
$$\deg(C) = \deg_{\chi}(C) + \deg_q(C).$$
The {\em topological degree} $\deg_{\chi}(C) = \chi(C) - n$ is given by the
Euler characteristic of $C$ and the {\em $q$-degree} $\deg_q(C) = j - i$ is given
by the relative difference in $q$-gradings.

We impose the relations implied by the requirement that the circle is isomorphic to the direct sum of two empty sets,
\begin{equation}\label{circeq}
\CPic{circle}\quad\cong\quad q\emptyset \oplus q^{-1} \emptyset,
\end{equation}
using maps of internal degree zero (see \cite{DBN2}). In doing so, we obtain
the category $\Cob(n)$ as a quotient of the additive closure
$\Mat(\PCob(n))$ of $\PCob(n)$. 

There is a composition: 
$$\otimes : \Cob(n) \times \Cob(n) \to \Cob(n).$$
It is defined by gluing diagrams and cobordisms. The unit $1_n$
with respect to this composition is a diagram consisting of $n$ parallel
lines.

\subsubsection{Categories of half-graded chain complexes}\label{chcxsec}

\begin{definition}\label{komdef}
  If $\eA$ is an additive category then $\hfcxA$ will denote the {\em
    differential graded category of $\frac{1}{2}$-graded chain complexes} in
  $\eA$.
\end{definition}

In more detail, the objects of $\hfcxA$ are chain complexes $C =
(C_i,d^C_i)_{i\in\frac{1}{2}\ZZ}$ where $C_i\in \eA$ and $d^C_i : C_i \to
C_{i+1}$. Each chain complex $C$ is bounded from below: $C_i = 0$ for $i$ sufficiently small.

Suppose that $C = (C_i,d^C_i)_{i\in\frac{1}{2}\ZZ}$ and $D =
(D_i,d^D_i)_{i\in\frac{1}{2}\ZZ}$ are two chain complexes in $\hfcxA$. Then a
{\em map} $g : C \to D$ of {\em $t$-degree} $l$ is a collection $g = \{ g_i
: C_i \to D_{i+l} \}_{i\in \frac{1}{2}\ZZ}$ of maps in $\eA$. The vector
space of maps with $t$-degree $l$ will be denoted by $\Hom^l(C,D)$.  There
is a composition
$$\circ : \Hom^l(C, D) \ott \Hom^m(D, E) \to \Hom^{l+m}(C, E)$$
and there are identity maps $1_{C} = \{ 1_{C_i} : C_i \to C_i \}$ of degree
zero.

There is a differential $d : \Hom^l(C, D) \to \Hom^{l+1}(C, D)$ which is
defined on a map $g = \{g_i\}$ of $t$-degree $l$ by the formula:
\begin{equation}\label{homdiffeq}
(dg)_{i+1} = g_{i+1} \circ d^C_i + (-1)^{l+1} d^D_{i+l} \circ g_i.
\end{equation}
If $g \in \Hom^l(C,D)$ and $h \in \Hom^m(C,D)$ are maps of $t$-degree $l$ and $m$ respectively, then the differential $d$ satisfies the Leibniz rule:
$$d(h \circ g) = h \circ (dg) + (-1)^{l} (dh) \circ g.$$
If $1_C : C \to C$ is the identity map then $d(1_C) = 0$. Let $\Hom^*(C,D) =
\prod_l \Hom^l(C,D)$ be the maps from $C$ to $D$. Then the pairs
$(\Hom^*(C,D),d)$ are chain complexes and the category $\hfcxA$ is a
differential graded category \cite{Toen}.

\begin{definition}
The {\em homotopy category} $\hohf$  of $\hfcxA$ has the same objects as the category $\hfcxA$. The maps in $\hohf$ are given by the zeroth homology groups of the maps in $\hfcxA$:
$$\Hom_{\hohf}(C,D) = \Hm^0(\Hom^*(C,D)),$$
see \cite[Def. 1]{Toen}. 
\end{definition}

Two objects $C$ and $D$ in $\hfcxA$ are {\em homotopic}, $C\simeq D$, when they are isomorphic in the homotopy category $\hohf$. A chain complex $C$ is {\em contractible} when $C\simeq 0$.


\begin{definition}
The {\em category of $\frac{1}{2}$-graded chain complexes of cobordisms} $\hfcx$ is the differential graded category of $\frac{1}{2}$-graded chain complexes in $\Cob(n)$.
$$\hfcx = \Kom^*_{\frac{1}{2}}(\Cob(n))$$
\end{definition}

\subsubsection{Tangle invariants}\label{tangsec}

\begin{definition}\label{skeindef}
The {\em skein relation} associated to a crossing is the cone complex:
\begin{equation}\label{skeineq}
\CPic{crossing} = t^{\frac{1}{2}} \CPic{Iresolution} \to t^{-\frac{1}{2}} \CPic{Hresolution}.
\end{equation}
\end{definition}

The definition above allows one to associate chain complexes in the category
$\hfcx$ to diagrams of framed $(n,n)$-tangles. Any two such diagrams which
differ by Reidemeister moves 2 or 3 are assigned to homotopy equivalent
chain complexes. The first Reidemeister move results in a degree shift, see
Property \ref{abstwprop} in Section \ref{twandtangsec}.

\subsection{Projectors}\label{projectorsec}
In \cite{CK}, V. Krushkal and the author defined a special chain complex
$P_n$. This construction is summarized by the theorem below. See also \cite{FSS, R1}.

\begin{theorem}\label{uprojectorthm}
There exists a chain complex $P_n \in \Kom^*_{\frac{1}{2}}(n)$ called the \emph{universal
  projector} which satisfies

\begin{enumerate}
\item $P_n$ is positively graded with differential having internal degree zero.
\item The complex $P_n\ott D$ is contractible for any diagram $D$ which is not the identity $1_n$.
\item The identity object appears only in homological degree zero and only once.
\end{enumerate}
\noindent Properties (1)--(3) characterize the projector $P_n$ uniquely up to homotopy.
\end{theorem}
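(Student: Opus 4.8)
The statement has two independent halves, existence and uniqueness, and I would prove the conceptual core---uniqueness---by an absorption argument for the gluing product $\otimes$, while existence requires exhibiting an explicit model. For existence I would realize $P_n$ as a homotopy colimit of finite approximations. The most transparent model takes the complex $T^k$ assigned by the skein relation to the $k$-fold full twist on $n$ strands, normalized by a grading shift so that $1_n$ sits in homological degree zero, together with natural structure maps relating $T^k$ and $T^{k+1}$; since each additional full twist alters the complex only in strictly higher homological degree, the colimit $P_n = \operatorname{hocolim}_k T^k$ exists, is bounded below, and is positively graded with differential of internal degree zero, giving property (1). (An alternative combinatorial model builds $P_n$ inductively from $P_{n-1}$ as an iterated mapping cone categorifying the Jones--Wenzl recursion $p_n = p_{n-1} - \tfrac{[n-1]}{[n]}\,p_{n-1}U_{n-1}p_{n-1}$, the rational coefficient being replaced by homological shifts inside $\hfcx$.) Property (3) is then read off the bottom of the complex, where $1_n$ appears alone in degree zero. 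For property (2) I would first check the generating case $P_n \otimes U_i \simeq 0$ for each cup--cap $U_i$: gluing a full twist onto $U_i$ lets the twists unwind as Reidemeister-I curls, i.e.\ grading-shifted copies pushed into arbitrarily high homological degree, so the colimit is contractible; since every non-identity flat diagram $D$ contains such a turn-back, one reduces $P_n \otimes D \simeq 0$ to this case.

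For uniqueness, suppose $P$ and $P'$ both satisfy (1)--(3); I would show $P \otimes P' \simeq P'$ and $P \otimes P' \simeq P$, whence $P \simeq P'$. For the first equivalence, filter $P \otimes P'$ by the homological degree of the $P$-factor. The associated graded in filtration degree $i$ is $P_i \otimes P'$, a direct sum of terms $D \otimes P'$ with $D$ a diagram occurring in $P_i$. For $i>0$ every such $D$ is non-identity by (3), so $D \otimes P' \simeq 0$ by property (2) in its left-handed form, valid because of the evident left--right symmetry of $\Cob(n)$, under which $1_n$ is fixed and non-identity diagrams remain non-identity. In filtration degree $0$ we have $P_0 = 1_n \oplus (\text{non-identity diagrams})$, so that piece is $1_n \otimes P' \oplus (\text{contractible}) \simeq P'$. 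Thus the associated graded is $P'$ concentrated in filtration degree $0$ with everything else contractible, and the first equivalence follows. Filtering instead by the degree of the $P'$-factor and applying property (2) for $P$ directly gives $P \otimes P' \simeq P$; combining the two yields $P \simeq P'$.

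The main obstacle, common to both halves, is that these complexes are unbounded above, so neither the stabilization of the colimit nor the collapse of the filtration is formal. The point that rescues both arguments is a degreewise finiteness: since $P$ and $P'$ are bounded below, in each fixed total homological degree $m$ the product $P \otimes P'$ is a finite direct sum of terms $P_i \otimes P'_{m-i}$, so the filtration is finite in every degree. This lets me discard the contractible associated-graded pieces by Gaussian elimination performed one homological degree at a time, with only finitely many cancellations below any given degree, and the same finiteness guarantees that the approximations $T^k$ stabilize degree by degree so that the colimit defining $P_n$ is well behaved. I expect verifying this degreewise convergence, rather than any single algebraic identity, to be where the real work lies.
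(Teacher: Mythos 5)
The paper itself contains no proof of this theorem: it is presented as a summary of the construction in \cite{CK} (see also \cite{FSS, R1}), so there is no internal argument to compare against line by line. Measured against those sources, your outline is essentially the standard one. The existence model $P_n = \operatorname{hocolim}_k T^k$ over iterated full twists is Rozansky's construction from \cite{R1}, while your parenthetical inductive model categorifying the Jones--Wenzl recursion is the one actually carried out in \cite{CK}; the uniqueness argument via $P \simeq P\otimes P' \simeq P'$, collapsing a filtration by the homological degree of one tensor factor through degreewise-finite Gaussian elimination, is exactly the argument of \cite{CK}. You also correctly locate the real analytic content in the degreewise stabilization and convergence issues for these bounded-below but unbounded-above complexes.

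One step as written has a genuine circularity: the ``left-handed form'' of property (2). The reflection $r$ of $\Cob(n)$ is an \emph{anti}-automorphism of the gluing product, so if $P'$ satisfies (1)--(3) then it is $r(P')$ that absorbs non-identity diagrams from the left; this does not yet say that $P'$ itself does. Concluding $D\otimes P'\simeq 0$ from ``the evident left--right symmetry'' silently assumes $P'\simeq r(P')$, which is an instance of the uniqueness you are in the middle of proving. The fix stays inside your framework: first run the filtration argument on $P'\otimes r(P')$, filtering once by each tensor factor; the right-absorption axiom for $P'$ and the left-absorption property of $r(P')$ are each directly available for exactly one of the two filtrations, yielding $P'\otimes r(P')\simeq P'$ and $P'\otimes r(P')\simeq r(P')$, hence $P'\simeq r(P')$ and two-sided absorption. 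With that lemma in hand your two equivalences go through as stated. A smaller implicit assumption worth recording is that each chain group $P_i$ is a finite direct sum of diagrams --- this is what makes each associated graded piece a finite sum of contractibles and legitimizes the degree-by-degree cancellation --- and it holds by construction since objects of $\Cob(n)$ live in $\Mat(\PCob(n))$.
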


In \cite{CH}, M. Hogancamp and the author constructed a series of projectors
$P_{n,k}\in\Kom^*_{\frac{1}{2}}(n)$ where $k = n, n-2, n-4, \ldots, n \imod{2}$. (The bottom
projector $P_{n,n\imod{2}}$ is $P_{n,0}$ when $n$ is even and $P_{n,1}$ when
$n$ is odd.) These {\em higher order projectors} are characterized by
properties analogous to those mentioned in the theorem above.

\begin{definition}
  Suppose that $C = (C_i,d^C_i)_{i\in\frac{1}{2}\ZZ}$ is a chain complex in
  $\Kom^*_{\frac{1}{2}}(n)$ and let $C_i = \oplus_j C_{i,j}$ so that each
  object $C_{i,j}\in\PCob(n)$ is a diagram. Then the \emph{through-degree}
  $\tau(C)$ is the maximum over all of the diagrams $C_{i,j}$ of the minimum
  number of vertical strands in any cross section of $C_{i,j}$.
\end{definition}

In other words, $\tau(C)$ is the largest number of lines connecting the bottom to
the top of any diagram $C_{i,j}$ in $C$. Since vertical lines can only be added
or removed in pairs, we have: 
$$\tau(C)\in\{ n,n-2,n-4,\ldots,n \imod{2}\}.$$

\begin{theorem}\label{hothm}
  For each positive integer $n$, there is a series of {\em higher order
    projectors} $P_{n,k}\in\Kom^*_{\frac{1}{2}}(n)$ where $k = n, n-2, n-4, \ldots, n
  \imod{2}$ which satisfy the properties:

\begin{enumerate}
\item The $P_{n,k}$ have through-degree $\tau(P_{n,k}) =k$.
\item If $D$ is diagram with through-degree $\tau(D) < k$ then
$$D\otimes P_{n,k} \simeq 0\conj{ and } P_{n,k} \otimes D \simeq 0.$$
\end{enumerate}

\noindent Together with a normalization axiom, analogous to (3) in Theorem
\ref{uprojectorthm} above, these properties characterize each object
$P_{n,k}$ uniquely up to homotopy.
\end{theorem}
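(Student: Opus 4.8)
The plan is to establish existence by an explicit construction built on the universal projectors of Theorem \ref{uprojectorthm}, and to prove uniqueness by the tensor-idempotent argument of \cite{CK} adapted to the through-degree filtration. The base case $k=n$ is immediate: among diagrams the identity $1_n$ is the only one of through-degree $n$, so a diagram $a$ has $\tau(a)<n$ precisely when it is non-identity, and property (2) of Theorem \ref{uprojectorthm} then gives $P_n\ott a\simeq 0$, with $a\ott P_n\simeq 0$ following from the left--right symmetry of $P_n$. Since $1_n$ occurs in $P_n$ we also have $\tau(P_n)=n$, so we may take $P_{n,n}=P_n$. It remains to produce $P_{n,k}$ for $k<n$.

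For existence when $k<n$, I would first fix a \emph{seed} $\sigma_{n,k}\in\hfcx$, obtained by placing the universal projector $P_k$ on a distinguished set of $k$ strands and closing off the remaining $n-k$ strands with $\tfrac{1}{2}(n-k)$ nested turnbacks at the top and bottom. By construction $\tau(\sigma_{n,k})=k$, and because the core $P_k$ annihilates diagrams of through-degree $<k$ on its $k$ strands, the seed is a first approximation to the desired projector. To upgrade it to a genuine two-sided projector one saturates: form the homotopy colimit of a telescope
$$\sigma_{n,k}=X_0\to X_1\to X_2\to\cdots$$
in which each structure map adjoins a contractible correction that annihilates one further family of through-degree-$<k$ diagrams. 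The colimit $P_{n,k}$ inherits through-degree $k$, and the iterated insertion of copies of $P_k$ arranges property (2) from both sides. The normalization axiom is imposed by tracking the unique through-degree-$k$ diagram occurring in homological degree zero, exactly as in the universal case.

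For uniqueness, suppose $P$ and $P'$ both satisfy (1), (2) and the normalization; the argument parallels \cite{CK}, with the dichotomy ``identity versus non-identity'' replaced by ``through-degree $=k$ versus $<k$''. Filtering $P$ by through-degree, every term $D$ with $\tau(D)<k$ satisfies $D\ott P'\simeq 0$ by property (2) of $P'$, so Gaussian elimination contracts the whole through-degree-$<k$ part and yields $P\ott P'\simeq e_{n,k}\ott P'$, where $e_{n,k}$ is the through-degree-$k$ term singled out by the normalization. The normalization is then used to identify $e_{n,k}\ott P'\simeq P'$; by the symmetric computation $P\ott P'\simeq P$, and hence $P\simeq P'$.

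The hard part is the existence half, and it concentrates in two places. First, one must show the saturation telescope \emph{converges}, which forces a grading argument: the structure maps must raise a filtration by internal (or homological) degree, so that in each fixed degree the system $X_0\to X_1\to\cdots$ stabilizes and the homotopy colimit is a well-defined, bounded-below object of $\hfcx$. Second, one must verify property (2) \emph{in full} --- that $P_{n,k}$ annihilates \emph{every} diagram of through-degree strictly less than $k$, not merely those already detected by a single copy of $P_k$ --- which draws on the contractibility statements of Theorem \ref{uprojectorthm} together with the combinatorics of turnbacks. The identification $e_{n,k}\ott P'\simeq P'$ in the uniqueness step is a secondary but genuine subtlety, requiring the normalization to pin down the through-degree-$k$ sector precisely as for the universal projector in \cite{CK}.
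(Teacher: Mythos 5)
A preliminary but important point: this paper does not prove Theorem \ref{hothm} at all. The result is imported from \cite{CH} (see also Rozansky's constructions) and is stated here without proof, so there is no in-paper argument to measure your proposal against. Judged on its own terms, your outline correctly reconstructs the overall strategy: $P_{n,n}=P_n$ disposes of the top case, uniqueness goes by the tensor-contraction argument of \cite{CK} with ``non-identity'' replaced by ``through-degree $<k$'', and existence requires manufacturing a two-sided idempotent of through-degree exactly $k$. But the existence half, as written, is a program rather than a proof, and the places you yourself flag as ``the hard part'' are precisely where all of the content of \cite{CH} lives.

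Concretely: (i) the seed $\sigma_{n,k}$, with $P_k$ on $k$ distinguished strands and turnbacks on the remaining $n-k$, only annihilates those compositions that force a turnback directly onto the $k$ strands carrying $P_k$; a diagram $a$ with $\tau(a)<k$ can route its turnbacks entirely through the $n-k$ closed-off strands, so even the claim that the seed is a ``first approximation'' already requires the Temperley--Lieb cell combinatorics you defer. (ii) The saturation telescope is unspecified: you do not say what the correction terms are, why the structure maps strictly increase a bounded-below filtration so that the homotopy colimit exists in $\Kom^*_{\frac{1}{2}}(n)$, or why saturating the annihilation property on one side does not destroy it on the other --- two-sidedness is a genuine obstruction here, not a formality. (iii) In the uniqueness step, the identification $e_{n,k}\ott P'\simeq P'$ is not analogous to the universal case, where $e_{n,n}=1_n$ makes it a tautology; for $k<n$ the leading term is a nonidentity diagram decorated with a copy of $P_k$, and showing that it acts as a unit on $P'$ up to homotopy is itself a substantial lemma, essentially equivalent to the idempotence $P_{n,k}\ott P_{n,k}\simeq P_{n,k}$ of Equation \eqref{decompeq}. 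Moreover, contracting the through-degree-$<k$ part of $P\ott P'$ requires an argument that a (possibly infinite) convolution of contractible complexes is contractible, which needs the boundedness of $P$ to run. None of these steps points in a wrong direction, but each is an unproved assertion on which the theorem depends, so the proposal should be regarded as a correct plan rather than a proof.
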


The higher order projectors satisfy orthogonality and idempotence relations. They also fit together to form a convolution chain complex that is homotopy equivalent to the identity object, \cite[\S 8]{CH}.
\begin{equation}\label{decompeq}
P_{n,l}\otimes P_{n,k}\simeq \d_{lk}P_{n,k} \conj { and } 1_n \simeq P_{n,n \imod{2}} \rightarrow \cdots \rightarrow P_{n,n-4} \rightarrow P_{n,n-2}\rightarrow P_{n,n}
\end{equation}

\vskip .1in

\begin{remark}
  The objects $P_n$ and $P_{n,k}$ were originally defined in categories
  $\Kom(n)$. By construction, every object of $\Kom(n)$ is an object of
  $\Kom^*_{\frac{1}{2}}(n)$. This identification allows us to state the
  theorems as we have above.
\end{remark}

\subsubsection{Tangles and twists}\label{twandtangsec}
If the projector $P_{n,k}$ is represented by a box then it satisfies the two
properties below.

\begin{property}{(drags through tangles)}\label{dragprop}
$$\CPic{line-proj-o1} \simeq \CPic{line-proj-o2}\conj{ and } \CPic{line-proj-u1} \simeq \CPic{line-proj-u2}$$
\end{property}

\begin{property}{(absorbs twists)}\label{abstwprop}
$$\CPic{pflipper1} \simeq\quad \pSHIFT \CPic{line-proj}\hspace{-.185in}\conj{ and }\CPic{pflipper2} \simeq\quad \nSHIFT  \CPic{line-proj}$$
\end{property}

For the degree shifts above to be accurate, the skein relation must be given
by Equation \eqref{skeineq}. When $n=1$ and $k=1$, Property 2 determines the
grading shift associated to the first Reidemeister move.

\subsection{Group actions on categories}\label{groupactsec}
\begin{definition}
If $G$ is a group and $\kC$ is a differential graded category then an {\em action
of $G$ on $\kC$} is a homomorphism from the group $G$ to the category $\Endo(\eC)$ of 
functors from $\kC$ to $\kC$:
$$F : G \to \Endo(\kC)\conj{ such that } F(gh)\simeq F(g)\circ F(h). $$
For discussion of dg functors see \cite[\S 2.3]{Toen}.  
\end{definition}

If $G = \inp{ S : R}$ is a presentation for the group $G$ then a group
action is obtained by specifying functors $F_s, F_{s^{-1}} : \kC \to \kC$
for each generator $s\in S$, and homotopy equivalences
\begin{equation}\label{eqeq}
F_{s_1} \circ F_{s_2} \circ \cdots \circ F_{s_N} \simeq F_{s'_1} \circ
F_{s'_2} \circ \cdots \circ F_{s'_M} 
\end{equation}
whenever the words $s_1 s_2 \cdots s_N$ and $s'_1 s'_2 \cdots s'_M$ coincide
by virtue of the relations in $R$. To accomplish this, it suffices to find
homotopy equivalences, $F_{s_1} \circ \cdots \circ F_{s_N} \simeq \Id_\eC$
for each $r\in R$, $r = s_1\cdots s_N$.

When a group $G$ acts on a dg category $\eC$, there is an induced action of
$G$ on the homotopy category $\Ho(\eC)$ of $\eC$. If $\Ho(\eC)$ is a
triangulated category then the action of $G$ on $\Ho(\eC)$ agrees with
definitions found in the references \cite{KoSei, Rou, ThoSei}.

The notion of group action that is given above is sometimes called
\emph{weak}. A strong action is one in which the homotopy equivalences
\eqref{eqeq} are uniquely determined. We will only prove that the actions
defined here are weak, see Remark \ref{remark3}.

In Section \ref{gensec}, we will construct $\SL(2,\ZZ)$ actions on
categories $\kR_{n,k}$ and $\kK_n$.

\subsection{Reducing the grading}\label{cyclicgrsec}

\newcommand{\qu}{\sls \shft}
\newcommand{\Cqu}{\eC\qu}
\newcommand{\shft}{t^{\frac{n}{2}}q^m}

In this section we introduce cyclically graded dg categories $\eC\qu$. If
$\eC$ is a $\frac{1}{2}$-graded dg category, such as
$\Kom^*_{\frac{1}{2}}(\eA)$, then there are functors
\begin{equation}\label{shifteq}
t^{\frac{1}{2}},q : \eC \to \eC,
\end{equation}
which increase the $t$ and $q$ gradings by $\frac{1}{2}$ and
$1$ respectively. The categories $\eC\qu$ are obtained by collapsing the
grading so that the identity:
$$\shft \cong \Id_{\eC\qu}$$
holds in the category $\Endo(\eC\qu)$. This
is accomplished by extending a technique used to define $\ZZ/2$-graded dg
categories \cite[\S 5.1]{Dyckerhoff}.

\begin{defn}\label{anmalg}
  For each pair of integers, $n,m\in \ZZ$, there is a $q$-graded differential
  graded algebra $(L_{n,m},d)$ which is given by
$$L_{n,m} = R[t^{\frac{1}{2}}, q]/(t^{\frac{n}{2}}q^m=1) \conj{ and } d = 0.$$
The ring $R$ is chosen to be the ground field of the category $\eC$.  The grading of
$L_{n,m}$ is determined by the table below.
\begin{center}\begin{tabular}{c||cc} & $\deg_q$ & $\deg_t$ \\
\hline
$q$ & $1$ & $0$\\
$t^{\frac{1}{2}}$ &$0$  & $\frac{1}{2}$\\
\end{tabular}\end{center}
\end{defn}

\begin{defn}
Let $\eC$ be a $\frac{1}{2}$-graded dg category. An {\em $L_{n,m}$-module in $\eC$} is a pair $(C,f)$ consisting of an object $C \in \eC$ and a map of graded dg algebras:
$$f : L_{n,m} \to \Endo(C).$$
The {\em category $\eC\qu$ of $L_{n,m}$-modules in $\eC$} is the dg category consisting of $L_{n,m}$-modules and $L_{n,m}$-equivariant maps.
\end{defn}

In more detail, the objects of $\eC\qu$ are pairs $(C, f)$ consisting
of objects $C\in \eC$ and maps $f : L_{n,m} \to \Endo(C)$ of
differential graded algebras which preserve the gradings. A map $h : (C,f) \to (D,g)$ of degree $l$
is a map $h : C \to D$ of degree $l$ in $\eC$ which commutes with the
action of $L_{n,m}$ on $C$ and $D$ respectively:
\begin{equation}\label{commeq}\begin{tikzcd}
C\arrow{r}{h}\arrow{d}{f(a)}&D\arrow{d}{g(a)}\\
C\arrow{r}{h}&D
\end{tikzcd}\end{equation}
for all $a \in L_{n,m}$.

The functors $t^{\frac{1}{2}} , q : \eC \to \eC$ from Equation
\eqref{shifteq} induce functors $t^{\frac{1}{2}}, q : \eC\qu \to
\eC\qu$. For example, if $(C,f)$ is an object of $\eC\qu$ then
$t^{\frac{k}{2}}q^l(C,f) = (t^{\frac{k}{2}} q^l C, \vp \circ f)$ where $\vp$
is the natural isomorphism $\Endo(C) \to \Endo(t^{\frac{k}{2}}q^lC)$.

The proposition below shows that the equation $t^{\frac{n}{2}} q^m \cong
\Id_{\eC\qu}$ holds in $\Endo(\eC\qu)$.

\begin{proposition}
Suppose that $t^{\frac{1}{2}}, q : \Cqu \to \Cqu$ denote the functors which increase the $t$-degree and $q$-degree in $\Cqu$ by $\frac{1}{2}$ and $1$ respectively. Then there is an isomorphism,
$$\eta : t^{\frac{n}{2}} q^m \cong \Id_{\eC\qu}$$
in the functor category $\Endo(\Cqu)$.
\end{proposition}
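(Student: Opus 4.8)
The plan is to produce the natural isomorphism $\eta$ directly out of the $L_{n,m}$-module data attached to the objects of $\hfcxA\qu$. Write $T = t^{\frac{n}{2}}q^m$ for the shift functor in question. The key elementary fact is that the defining relation $t^{\frac{n}{2}}q^m = 1$ makes both generators $t^{\frac{1}{2}}$ and $q$ into \emph{units} of $L_{n,m}$; for instance $t^{\frac{1}{2}}\cdot t^{\frac{n-1}{2}}q^m = t^{\frac{n}{2}}q^m = 1$, and similarly for $q$. Consequently, for every object $(C,f)$ the endomorphisms $f(t^{\frac{1}{2}})$ and $f(q)$ are invertible, and, read through the grading shifts, they are degree-zero isomorphisms $t^{\frac{1}{2}}C \xrightarrow{\cong} C$ and $qC \xrightarrow{\cong} C$ in $\hfcxA$.

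First I would define, for each object $(C,f)$, the morphism
$$\eta_{(C,f)} := f\!\left(t^{\frac{n}{2}}q^m\right) : TC \xrightarrow{\ \cong\ } C,$$
obtained by composing $n$ copies of the isomorphism coming from $f(t^{\frac{1}{2}})$ with $m$ copies of the one coming from $f(q)$, inserting the appropriate shift functors at each stage. Each factor is an isomorphism, so $\eta_{(C,f)}$ is an isomorphism in $\hfcxA$. Next I would check that $\eta_{(C,f)}$ is in fact a morphism in $\hfcxA\qu$, i.e.\ that it intertwines the transported structure $\vp(f)$ on $TC$ with $f$ on $C$. Since $\vp(f)(a) = T(f(a))$ is the shifted endomorphism, the required identity $f(a)\circ \eta_{(C,f)} = \eta_{(C,f)}\circ \vp(f)(a)$ for all $a\in L_{n,m}$ is precisely the statement that $f(t^{\frac{n}{2}}q^m)$ commutes with every $f(a)$; this is immediate from commutativity of $L_{n,m}$ together with $f$ being an algebra homomorphism, the shift $T(f(a))$ being exactly the bookkeeping of the twisted composition.

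Naturality is then checked against an arbitrary morphism $h : (C,f)\to (D,g)$ of $\hfcxA\qu$: the square
\begin{center}
\begin{tikzcd}
TC\arrow{r}{Th}\arrow{d}{\eta_{(C,f)}}&TD\arrow{d}{\eta_{(D,g)}}\\
C\arrow{r}{h}&D
\end{tikzcd}
\end{center}
commutes because $h$, being a map of $L_{n,m}$-modules, commutes with the module actions, so that $h\circ f(t^{\frac{n}{2}}q^m) = g(t^{\frac{n}{2}}q^m)\circ Th$. As each component is an isomorphism, $\eta$ is the desired natural isomorphism $T = t^{\frac{n}{2}}q^m \cong \Id$ in $\Endo(\hfcxA\qu)$.

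I expect the one genuinely delicate point to be bookkeeping rather than conceptual content: one must make precise the sense in which an element such as $f(t^{\frac{1}{2}})$ --- which a priori lives in the cyclically graded algebra $\Endo(C)$ --- is reinterpreted as an honest degree-zero isomorphism $t^{\frac{1}{2}}C\to C$ of shifted objects, and then track the shift functors carefully through the $n$- and $m$-fold composites so that the relation $t^{\frac{n}{2}}q^m=1$ yields a map $TC\to C$ rather than a self-map of $C$. Once the interaction between $\vp$ and the grading shifts is pinned down, commutativity of $L_{n,m}$ and functoriality of the shifts render the remaining verifications routine.
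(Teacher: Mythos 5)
Your argument is essentially identical to the paper's proof: both build $\eta_{(C,f)}$ from the structure maps $f(t^{\frac{1}{2}})$ and $f(q)$ supplied by the $L_{n,m}$-module structure, and both deduce naturality from the commutation condition \eqref{commeq} imposed on morphisms of $\hfcxA\qu$. Your version merely spells out the invertibility of the generators and the shift bookkeeping in more detail than the paper does.
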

\begin{proof}
  The definition of $\Cqu$ implies that for each object $(C,f)\in \Cqu$ there are maps
  $f(t^{\frac{1}{2}}) : C\to C$ and $f(q) : C\to C$. These maps determine
  isomorphisms: $\eta_C : t^{\frac{n}{2}} q^m (C,f) \to (C,f)$.  If $h : (C,f) \to
  (D,g)$ is a map in $\Cqu$ then Equation \eqref{commeq} implies that
  $h$ commutes with $\eta_C$. So the collection $\eta = \{\eta_C\}$ is a
  natural transformation $t^{\frac{n}{2}} q^m \to \Id$.
The map $\eta$ is the isomorphism in the statement above.
\end{proof}

Suppose that $\gVect_R$ is the category of graded vector spaces over
$R$. Then the example below illustrates the material introduced above.

\begin{example}
  An $L_{0,m}$-module in the dg category $\Kom^*_{\frac{1}{2}}(\gVect_R)$ is
  a chain complex $C$ on which the grading shift functor $q^m : C\to C$ acts
  by identity. Similarly, an $L_{n,0}$-module $C$ in the category
  $\Kom^*_{\frac{1}{2}}(\gVect_R)$ is a chain complex on which the functor
  $t^{\frac{n}{2}} : C\to C$ acts by identity.

  The categories $\Kom^*_{\frac{1}{2}}(\gVect_R)\sls t^0q^m$ and
  $\Kom^*_{\frac{1}{2}}(\gVect_R)\sls t^{\frac{n}{2}}q^0$ consist of
  $q^m$-cyclic and $t^{\frac{n}{2}}$-cyclic chain complexes respectively.
\end{example}

\vskip .1in

There is a {\em forgetful functor} $U : \eC\qu \to \eC$ which is determined
by the assignment $(C,f) \mapsto C$.  The functor $U$ has a left adjoint $P
: \eC \to \eC\qu$ which sends a chain complex $C$ to $P(C) = L_{n,m}\ott_R
C$. If $\pi : \frac{1}{2}\ZZ\times\ZZ \to \frac{1}{2}\ZZ\times\ZZ/(n,m)$ is
the quotient map then
$$P(C)_{i,j} = \bigoplus_{ (k,l)\in \pi^{-1}(i,j) } C_{k,l}$$
where $C_{k,l}$ denotes the value of $C$ in $t$-degree $k$ and $q$-degree $l$. 
The proposition below records this information.

\newcommand{\komatn}{\Kom^*_{\frac{1}{2}}(\eA)\qu}
\newcommand{\hotn}{\Ho_{\frac{1}{2}}(\eA)\qu}

\begin{proposition}\label{adjprop}
There is an adjunction:
$$P : \eC \leftrightarrows \Cqu : U.$$
\end{proposition}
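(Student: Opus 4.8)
The plan is to establish the adjunction $P \dashv U$ by exhibiting a natural isomorphism $\Hom_{\komatn}(P(C), (D,g)) \cong \Hom_{\hfcxA}(C, U(D,g))$ for all chain complexes $C \in \hfcxA$ and all $L_{n,m}$-modules $(D,g) \in \komatn$, and verifying its compatibility with the differentials so that it is an isomorphism of mapping complexes. Since $P(C) = L_{n,m} \ott_k C$ carries the obvious left $L_{n,m}$-module structure coming from left multiplication on the first factor, this is the categorical analogue of the classical extension-of-scalars/restriction-of-scalars adjunction, and the proof should proceed by the standard unit--counit or the hom-set bijection route.

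First I would describe the two mapping complexes explicitly. On one side, a map $h : P(C) \to (D,g)$ of degree $l$ must be a degree-$l$ map $L_{n,m} \ott_k C \to D$ in $\eC$ that commutes with the $L_{n,m}$-action in the sense of Equation \eqref{commeq}. On the other side, a map $C \to U(D,g) = D$ is just a degree-$l$ map in $\eC$ with no equivariance constraint. The key observation is that, because $L_{n,m}$ is generated as a $k$-algebra by $t^{\frac{1}{2}}$ and $q$ and is free as a $k$-module on the monomials $t^{\frac{k}{2}} q^l$ with $(k,l)$ ranging over coset representatives for $\pi$, an $L_{n,m}$-equivariant map out of $L_{n,m} \ott_k C$ is uniquely determined by its restriction to $1 \ott C$. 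Thus the bijection sends $h$ to its restriction along the unit $\iota : C \to L_{n,m}\ott_k C$, $c \mapsto 1 \ott c$, and conversely sends a map $\phi : C \to D$ to the equivariant extension $a \ott c \mapsto g(a)(\phi(c))$. One checks directly that these two assignments are mutually inverse, using that $g : L_{n,m} \to \Endo(D)$ is a map of graded algebras so that the extension is genuinely $L_{n,m}$-linear and respects the cyclic grading collapse recorded by $\pi$.

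Next I would verify that this bijection is compatible with the differential on $\Hom^*$ defined in Equation \eqref{homdiffeq}. Because $L_{n,m}$ has $d = 0$ and the $L_{n,m}$-action on any module is by chain maps (the structure map $f$ lands in degree-zero endomorphisms commuting with the differential), the differential on the equivariant mapping complex $\Hom^*_{\komatn}(P(C),(D,g))$ agrees under restriction with the differential on $\Hom^*_{\hfcxA}(C,D)$; concretely, the Leibniz formula together with the equivariance of $g(a)$ shows that restricting a cocycle yields a cocycle and that the extension of a cocycle is a cocycle. Hence the hom-set bijection upgrades to an isomorphism of chain complexes, which is the statement of a dg adjunction. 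Naturality in both variables is routine: it follows because the unit $\iota$ is natural in $C$ and the extension formula is natural in $(D,g)$.

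The main obstacle I anticipate is bookkeeping the bigraded structure correctly, rather than any conceptual difficulty. Specifically, one must confirm that the free-module description $P(C)_{i,j} = \bigoplus_{\pi(k,l)=\pi(i,j)} C_{k,l}$ genuinely realizes $L_{n,m}\ott_k C$ as a $\frac{1}{2}\ZZ \times \ZZ$-graded object whose grading has been collapsed along $\pi$, and that the equivariance condition \eqref{commeq} forces exactly the identifications encoded by the quotient $\frac{1}{2}\ZZ\times\ZZ/(n,m)$. The relation $t^{\frac{n}{2}} q^m = 1$ in $L_{n,m}$ is what makes the extension map well-defined on the collapsed grading, and the delicate point is checking that no sign or degree discrepancy arises when $t^{\frac{1}{2}}$ acts, since $t$-degree shifts interact with the sign $(-1)^{l+1}$ in the differential \eqref{homdiffeq}. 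Once the grading conventions are pinned down, the adjunction is formal.
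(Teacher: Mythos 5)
Your proposal is correct and is exactly the standard extension-of-scalars/forgetful adjunction argument that the paper itself invokes: the paper offers no details beyond the one-line remark that the adjunction $P = L_{n,m}\ott_k - \dashv U$ is standard (citing MacLane), and your hom-complex bijection via restriction along $c \mapsto 1 \ott c$ and equivariant extension is precisely that standard proof, written out with the appropriate dg and bigraded bookkeeping.
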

The proof that the free $L_{n,m}$-module functor $P = L_{n,m}\ott_{R} -$ is left adjoint to the forgetful functor $U$ is standard, see \cite[\S IV.2]{MacLane}.

In the remainder of this section we specialize to chain complexes.

\begin{remark}
  If $U(C)$ and $U(D)$ are cyclically graded complexes then $U(C)\ott U(D)$
  is a cyclically graded complex. If $\eA$ is a monoidal category then
  $\komatn$ is a monoidal category.
\end{remark}

\begin{proposition}
The homotopy category $\hotn$ of $\komatn$ is a triangulated category.  
\end{proposition}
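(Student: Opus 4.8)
The plan is to deduce the triangulated structure on $\hotn$ from the standard principle that the homotopy category of a pretriangulated dg category is triangulated (see \cite{Toen}); so it suffices to prove that the dg category $\komatn$ is pretriangulated, i.e.\ that it is closed, up to homotopy equivalence, under the shift functor and under the formation of mapping cones of closed degree-zero morphisms. The underlying dg category $\hfcxA$ is itself pretriangulated --- its homotopy category $\hohf$ is the usual homotopy category of $\frac{1}{2}$-graded chain complexes --- and the whole content of the proof is to check that these two constructions lift along the forgetful functor $U\colon \komatn \to \hfcxA$ to objects that again carry an $L_{n,m}$-module structure.

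First I would treat the shift. Given an object $(C,f)\in\komatn$, the shifted complex $C[1]$ in $\hfcxA$ is obtained by relabelling the $t$-grading and negating the differential, a construction assembled from the grading-shift functors $t^{\frac{1}{2}},q$ of \eqref{shifteq} which, as observed above, already descend to endofunctors of $\komatn$. Because $L_{n,m}$ has zero differential, the structure map $f$ takes values in the closed degree-zero endomorphisms of $C$, so conjugating $f$ by the shift produces a map of $q$-graded dg algebras $f[1]\colon L_{n,m}\to\Endo(C[1])$ preserving both gradings. This yields an object $(C[1],f[1])\in\komatn$ lifting $C[1]$; the inverse shift is handled identically, so the shift is an autoequivalence of $\komatn$.

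Next I would treat cones. Let $h\colon (C,f)\to(D,g)$ be a closed morphism of degree zero in $\komatn$, and form $\Cone(h)=C[1]\oplus D$ with its usual cone differential in $\hfcxA$. I would equip $\Cone(h)$ with the diagonal action $a\mapsto f[1](a)\oplus g(a)$. The only nontrivial point is that this action commutes with the off-diagonal component of the cone differential, and this is precisely the $L_{n,m}$-equivariance of $h$ recorded in \eqref{commeq}: the identity $g(a)\circ h = h\circ f(a)$ makes the diagonal action a chain map on the cone. The canonical maps $(D,g)\to\Cone(h)\to(C[1],f[1])$ are manifestly equivariant, so $\Cone(h)$ is a mapping cone inside $\komatn$. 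Together with the shift, this establishes that $\komatn$ is pretriangulated, and the proposition follows.

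I expect the main point requiring care --- rather than a genuine obstacle --- to be the bookkeeping of the two gradings and the signs. One must confirm that the homological shift built from the $\frac{1}{2}\ZZ$-valued $t$-grading is compatible with the relation $t^{\frac{n}{2}}q^m\cong\Id$ imposed on $\komatn$ (it is, since that relation commutes with every grading shift) and that the conjugated action $f[1]$ indeed preserves both the $t$- and $q$-degrees. Once these compatibilities are verified, no further work is needed beyond invoking the pretriangulated-implies-triangulated principle.
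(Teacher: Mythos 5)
Your argument is correct and rests on the same underlying principle as the paper's: exhibit $\komatn$ as a pretriangulated dg category by lifting the pretriangulated structure of $\hfcxA$ along the forgetful functor, then invoke the fact that the homotopy category of a pretriangulated dg category is triangulated. The difference is in which equivalent formulation of ``pretriangulated'' gets verified. The paper works directly in the Bondal--Kapranov framework \cite{BK}: it takes an arbitrary twisted complex $E=\{E_i,r_{ij}\}$ over $\eC=\komatn$, forms the totalization $T(E)=\oplus_i t^iE_i$, and checks that $T(E)$ represents the functor $\a(E)$, so that all twisted complexes are representable at once; the compatibility of the $L_{n,m}$-module structure with totalization is compressed into the one-line assertion that $P$ and $U$ commute with totalization. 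You instead verify the shift-and-cone criterion, which is the special case of one- and two-term twisted complexes, and you make explicit precisely the point the paper leaves implicit: that the equivariance condition \eqref{commeq} for a closed degree-zero morphism $h$ is exactly what allows the diagonal action $a\mapsto f[1](a)\oplus g(a)$ to commute with the off-diagonal entry of the cone differential, and that $d_{L_{n,m}}=0$ ensures the conjugated action $f[1]$ still lands in closed degree-zero endomorphisms. Your route is slightly more elementary (no Yoneda/representability bookkeeping) and more transparent about where the $L_{n,m}$-structure matters, at the cost of handling iterated cones implicitly rather than arbitrary twisted complexes in one stroke; the paper's route gives the representability statement $h_{T(E)}\cong\a(E)$ directly in the form needed to quote \cite[\S 3 Prop.~2]{BK}. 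Both are complete proofs.
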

The proof assumes familiarity with the reference \cite{BK}.
\begin{proof}
  The functors $P$ and $U$ commute with totalization. 

  In more detail, let $\eC = \komatn$. Given a twisted complex $E =
  \{E_i, r_{ij}\}$ over $\eC$ \cite[\S 1 Def. 1]{BK}. Let $\a(E) : \eC^{\op}
  \to \Kom$ be the functor determined by $E$ \cite[\S 1 Def. 3]{BK}. By
  definition,
$$\a(E)(X) = \oplus_i t^i \Hom_{\eC}(X,E_i).$$
The differential of this chain complex is the sum $d + Q$ where $d$ is the
$\Hom$-differential from Equation \eqref{homdiffeq} and $Q=(r_{ij})$.  

Let $T(E) = \oplus_i t^i E_i \in \eD$ be the chain complex with differential
$(r_{ij})$. Then the chain complex $T(E)$ represents the functor
$\a(E)$. This is because the functor $h_{T(E)} : \eC^{\op}\to\Kom$ associated to $T(E)$ by
the Yoneda embedding is given on objects $X$ by
$$h_{T(E)}(X) = \Hom(X,T(E)) = \Hom(X,\oplus_i t^i E_i) \cong\oplus_i t^i \Hom(X, E_i).$$
Checking that the differentials agree implies that $h_{T(E)}(X) \cong
\a(E)(X)$. This isomorphism does not depend on $X$, so $h_{T(E)} \cong
\a(E)$.

Since each twisted complex $E$ is represented by a chain complex $T(E)$ in
$\eC$, the category $\eC$ is pre-triangulated \cite[\S 3 Def. 1]{BK}. It
follows that the homotopy category $\Ho(\eC) = \hotn$ is triangulated
\cite[\S 3 Prop. 2]{BK}.
\end{proof}

The argument above also applies to subcategories.

\begin{corollary}
  Cyclic reductions of pre-triangulated subcategories $\eS\subset
  \Kom^*_{\frac{1}{2}}(\eA)$ have triangulated homotopy categories
  $\Ho_{\frac{1}{2}}(\eS)\qu$.
\end{corollary}

\vskip .1in

The proof of the main theorem shows that the modular group relations hold in
a certain category $\kP_{n,k}$ up to a grading shift $\sh_k :\kP_{n,k}\to \kP_{n,k}$. The
construction introduced in this section allows us to reduce the grading so
that the relations of the modular group hold without the grading shift.

\section{Braid groups and the modular group}\label{braidsec}

In this section we recall the relationship between the braid group $\bt$ and
the modular group $\pslz$. Notation established here will be used later.

\subsection{Braid groups}\label{braidgpssec}
If $D_n$ is the closed disk with $n$ punctures then the {\em braid group} $\B_n$
on $n$ strands is given by the path components of the group of boundary and
orientation preserving diffeomorphisms of $D_n$:
$$\B_n = \pi_0(\Diff^+(D_n,\partial D_n)).$$
Alternatively, suppose that $\Config_n(D^2)$ is the configuration space of $n$ distinct points in the unit disk. Then the symmetric group $S_n$ acts on $\Config_n(D^2)$ by permuting these points and the braid group $\B_n$ is the fundamental group of the quotient,
$$\B_n = \pi_1(\Config_n(D^2)/S_n,x_0),$$
see \cite{Birman, FM}. Each element $[\gamma] \in\B_n$ determines a loop
$\ga\in[\ga]$, $\ga : (I,\partial I)\to (\Config_n(D^2)/S_n,x_0)$. Such a
map $\ga$ factors as $\ga(t) = (\ga_1(t),\ldots,\ga_n(t))$ where $\ga_i : I
\to D^2$. The composition of the union $\coprod_i \ga_i$ with the fold map
is an embedding $\tilde{\ga} : \coprod_i I \hookrightarrow D^2\times I$
which determines a braid $\im(\tilde{\ga})$ in 3-space.  Each such braid can
be represented by a diagram consisting of a sequence of crossings $\sigma_i$
for $1\leq i < n$.

When $n=3$, a presentation for the braid group in terms of these generators
is given by
$$\bt = \inp{\s_1, \s_2 : \s_1 \s_2 \s_1 = \s_2\s_1\s_2 }.$$

\subsection{Modular groups}\label{pslzsec}

The {\em modular group} $\slz$ is the group of $2\times 2$ matrices
with integer coefficients and determinant $1$:
$$\slz = \left\{ \left(\begin{array}{cc} a & b \\ c & d \end{array} \right) : a,b,c,d\in\ZZ\conjj{and} ad-bc = 1\right\}.$$
It is also the mapping class group of the torus $T$, see \cite{FM}. If $I\in
\slz$ is the identity matrix then the center $\kZ(\slz) \subset \slz$ is the
subgroup $\{ I, -I \}$. The {\em projective modular group} $\pslz$ is the quotient
$$\pslz = \slz/\{I, -I\}.$$
Choosing representatives for the elements $\{I,-I\}$ yields a $\ZZ/2$-action on the torus
with quotient: $S^2_4 = T/(\ZZ/2)$, the four punctured sphere. The group
$\pslz$ is the mapping class group of $S^2_4$. It can be shown that $\pslz$
is isomorphic to the free product $\ZZ/2 * \ZZ/3$; equivalently:
$$\pslz \cong \inp{ a, b : a^2 = 1, b^3 = 1}.$$

\subsection{Relating $\bt$ and $\pslz$}\label{relatingsec} Denote by $q_n$ the braid:
$$q_n = \s_{n-1} \s_{n-2} \cdots \s_1 \in \B_n.$$
For instance:
$$q_3 = \CPic{cubebraid3}.$$ 
Multiplying $q_n$ by itself $n$ times produces the full twist, $T_n = q_n^n$. The equation,
\begin{equation}\label{commuteeq}
\s_i T_n = T_n \s_i \conj{for} 1\leq i < n,
\end{equation}
can be seen by dragging $\s_i$ through from the bottom of $T_n$ to the top of
$T_n$. It follows that $T_n$ is contained in the center $\kZ(\B_n) \subset
\B_n$ of the braid group. In fact, $\inp{T_n} = \kZ(\B_n)$, see \cite{chow}.

The following proposition tells us that the quotient $\B_3/\inp{T_3}$ is related to the modular group.

\begin{proposition}\label{btpslzthm}
The quotient of $\B_3$ by the subgroup generated by the full twist $T_3$ is isomorphic to the projective modular group $\pslz$.
$$\B_3/\inp{T_3} \cong \pslz$$

\end{proposition}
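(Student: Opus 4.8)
The plan is to pass to a presentation of $\bt$ adapted to the full twist, and then to read off the quotient directly from the relations. Recall that $\bt = \inp{\s_1,\s_2 : \s_1\s_2\s_1 = \s_2\s_1\s_2}$ and that $T_3 = q_3^3 = (\s_2\s_1)^3$ lies in the center $\kZ(\bt)$. First I would introduce the two elements $u = \s_1\s_2\s_1$ and $v = \s_1\s_2$. These generate $\bt$: since $u = v\s_1$ one has $\s_1 = v^{-1}u$, and then $\s_2 = \s_1^{-1}v = u^{-1}v^2$. Conversely $u$ and $v$ are words in $\s_1,\s_2$, so $\{\s_1,\s_2\}$ and $\{u,v\}$ are interchangeable generating sets.

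Next I would carry out the corresponding Tietze transformation, adjoining $u,v$ with their defining relations and eliminating $\s_1,\s_2$ by the substitutions $\s_1 = v^{-1}u$, $\s_2 = u^{-1}v^2$. A short word computation gives $\s_1\s_2\s_1 = u$ and $\s_2\s_1\s_2 = u^{-1}v^3$, so the single braid relation becomes $u = u^{-1}v^3$, i.e.\ $u^2 = v^3$. This yields the presentation $\bt \cong \inp{u,v : u^2 = v^3}$. In particular the common element $u^2 = v^3$ is central, and one checks using the braid relation once that it is exactly the full twist, $u^2 = (\s_1\s_2\s_1)^2 = (\s_2\s_1)^3 = T_3$. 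Hence $\inp{T_3} = \inp{u^2} = \inp{v^3}$.

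Finally I would take the quotient. Since $T_3$ is central, $\inp{T_3}$ is normal, and $\bt/\inp{T_3}$ is obtained from $\inp{u,v : u^2 = v^3}$ by adjoining the relator $T_3 = u^2$. Imposing $u^2 = 1$ forces $v^3 = u^2 = 1$ and makes the relation $u^2 = v^3$ redundant, so
$$\bt/\inp{T_3} \cong \inp{u,v : u^2 = 1,\ v^3 = 1}.$$
Comparing with the presentation $\pslz \cong \inp{a,b : a^2 = 1, b^3 = 1}$ recorded above (equivalently $\pslz \cong \ZZ/2 * \ZZ/3$) gives the desired isomorphism, via $u \mapsto a$ and $v \mapsto b$.

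The only genuinely delicate point is the Tietze step: one must verify that after eliminating $\s_1,\s_2$ the braid relation collapses to \emph{precisely} $u^2 = v^3$, with no relation gained or lost. This is where I expect the main work to lie, although it reduces to the two word computations $\s_1\s_2\s_1 = u$ and $\s_2\s_1\s_2 = u^{-1}v^3$ above. Everything else — that $u,v$ generate, that the central element $u^2 = v^3$ equals $T_3$, and that the resulting presentation is that of $\ZZ/2 * \ZZ/3$ — then follows immediately.
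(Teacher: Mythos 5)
Your proposal is correct and follows essentially the same route as the paper: the paper's proof also sets $a = \s_1\s_2\s_1$ and $b = \s_1\s_2$ and passes to the presentation $\inp{a,b : a^2 = 1, b^3 = 1}$ after killing $T_3$. You have simply written out the Tietze transformation and the identity $(\s_1\s_2\s_1)^2 = (\s_2\s_1)^3 = T_3$ that the paper leaves implicit.
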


\begin{proof}
If we set $a = \s_1 \s_2 \s_1$ and $b = \s_1 \s_2$ then
$$\B_3/\inp{T_3} = \inp{\s_1, \s_2 : \s_1 \s_2\s_1 = \s_2\s_1\s_2, T_3 = 1} \cong \inp{a,b : a^2 = 1, b^3 = 1} \cong \pslz.$$
\end{proof}

The groups $\B_n/\inp{T_n}$ are always related to mapping class groups of
punctured spheres, see \cite[\S 4.2]{Birman}.

\section{The Construction}\label{gensec}

In this section we construct sequences of categories $\kR_{n,k}$ and
prove that each category supports an action of the modular group. We begin
by defining categories $\kP_{n,k}$ as extensions of $\Kom^*_{\frac{1}{2}}(3n)$ from
Section \ref{cobsec}. The categories $\kR_{n,k}$ are obtained from
$\kP_{n,k}$ using the construction in Section \ref{cyclicgrsec}. In Theorem
\ref{pslactsthm} below, it is shown that there is an action of the group
$\pslz$ on each category $\kR_{n,k}$. This induces an action of the modular
group on $\kR_{n,k}$.

The first step is to define a chain complex $E_{n,k}$ consisting of a
composition of projectors. This object is illustrated in the proof
of Theorem \ref{pslactsthm}.

\begin{definition}{($E_{n,k}$)}
  Suppose that $P_{n,k}$ is the higher order projector from Theorem \ref{hothm}.
Then set
$$E_{n,k} = (P_{n,k} \sqcup P_{n,k} \sqcup P_{n,k}) \otimes P_{3n,k}.$$
\end{definition}

\begin{definition}{($\kP_{n,k}$)}\label{apndef}
The category $\kP_{n,k}$ is the full dg subcategory of $\Kom^*_{\frac{1}{2}}(3n)$
consisting of objects which are homotopy equivalent to those of the form $C
\otimes E_{n,k}$. 
\end{definition}

Each object in the category $\kP_{n,k}$ is bigraded. One can shift either
the $t$-degree or the $q$-degree of each object using the functors
$t^{\frac{1}{2}}$ or $q$. Property \ref{abstwprop} in Section
\ref{projectorsec} suggests the following definition.

\begin{definition}{($\sh_k$)}
  The shift functor $\sh_k : \kP_{n,k} \to \kP_{n,k}$ is the
  composition $\sh_k = t^{\ptwTsq} q^{\ptwQsq}$.
\end{definition}

If $C\in\kP_{n,k}$ is a chain complex then
$$\deg_t(\sh_k(C)) = \deg_t(C) + \ptwTsq \conj{and} \deg_q(\sh_k(C)) = \deg_q(C) + \ptwQsq.$$

\begin{definition}{($\kR_{n,k}$)}\label{arndef}
The category $\kR_{n,k}$ is obtained using the construction from Section \ref{cyclicgrsec}:
$$\kR_{n,k} = \kP_{n,k}\sls \sh_k.$$
\end{definition}

\begin{theorem}\label{pslactsthm}
The group $\pslz$ acts on the category $\kR_{n,k}$.
\end{theorem}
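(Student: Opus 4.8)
The plan is to build the action from the presentation $\pslz \cong \B_3/\inp{T_3}$ supplied by Proposition \ref{btpslzthm}, using the presentation-based notion of a weak group action from Section \ref{groupactsec}. Writing $\pslz = \inp{\s_1,\s_2 : \s_1\s_2\s_1 = \s_2\s_1\s_2,\ T_3 = 1}$ with $T_3 = (\s_2\s_1)^3$, it suffices to assign auto-equivalences $F_{\s_1},F_{\s_2}$ of $\kR_{n,k}$ (together with inverses) and to produce homotopy equivalences witnessing the braid relation and the relation $F_{T_3} \simeq \Id$. I would first construct these functors on $\kP_{n,k}$ and then descend them to the quotient $\kR_{n,k} = \kP_{n,k}\sls\sh_k$.

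For the generators, the three copies of $P_{n,k}$ in $E_{n,k}$ partition the $3n$ endpoints into three bundles of width $n$, and I would let $\s_1,\s_2\in\B_3$ act through the cabled braids $\hat\s_1,\hat\s_2\in\B_{3n}$ that braid adjacent bundles. Each cabled generator determines a chain complex in $\Kom^*_{\frac{1}{2}}(3n)$ via the skein relation \eqref{skeineq}, and tensoring with it gives a functor $F_{\s_i}=\hat\s_i\otimes-$. Since $F_{\s_i}(C\otimes E_{n,k})=(\hat\s_i\otimes C)\otimes E_{n,k}$ is again of the defining form, the functors preserve $\kP_{n,k}$, and the inverse braid complex furnishes a homotopy inverse, so each $F_{\s_i}$ is an auto-equivalence. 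The braid relation $F_{\s_1}F_{\s_2}F_{\s_1}\simeq F_{\s_2}F_{\s_1}F_{\s_2}$ is then immediate from invariance of the tangle complex under the third Reidemeister move (Section \ref{tangsec}); this already yields a $\B_3$-action on $\kP_{n,k}$.

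The decisive step --- and the one I expect to be the main obstacle --- is evaluating the full twist. I would show that on $\kP_{n,k}$ the cabled full twist acts by the translation functor, $F_{T_3}\simeq\sh_k = t^{\ptwTsq}q^{\ptwQsq}$. The mechanism is Property \ref{abstwprop}: a framing twist absorbed by a width-$n$ bundle $P_{n,k}$ contributes the shift $\pSHIFT$, while Property \ref{dragprop} lets one slide the cabled crossings through the bundles until the remaining twisting is absorbed by the global projector $P_{3n,k}$. Decomposing $\hat T_3$ into the mutual twisting of the three bundles and the twist seen by $P_{3n,k}$, and bookkeeping the accumulated $t$- and $q$-shifts, should collapse $F_{T_3}$ to a single copy of $\sh_k$. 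The delicate point is arranging the geometric manipulations so that the braid localizes near $E_{n,k}$ rather than being trapped behind an arbitrary factor $C$, and confirming that the exponents assemble to exactly $t^{\ptwTsq}q^{\ptwQsq}$ rather than some neighbouring power.

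Finally I would descend to $\kR_{n,k}$. By the construction of Section \ref{cyclicgrsec}, $\kR_{n,k}=\kP_{n,k}\sls\sh_k$ is the category on which $\sh_k\cong\Id$, so the computation of the previous paragraph gives $F_{T_3}\simeq\sh_k\simeq\Id$ there. Because the functors $F_{\s_i}$ commute with the grading shifts $t^{\frac{1}{2}}$ and $q$, they carry the defining module structure of the quotient to itself and hence descend to auto-equivalences of $\kR_{n,k}$. The braid relation persists and the full twist now acts trivially, so the $\B_3$-action descends to an action of $\B_3/\inp{T_3}\cong\pslz$ on $\kR_{n,k}$, which is the assertion.
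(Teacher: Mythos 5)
Your proposal is correct and follows essentially the same route as the paper: the generators act by tensoring with cabled braid complexes, the only nontrivial relation reduces to showing that the cabled full twist acts by the shift $\sh_k = t^{\ptwTsq}q^{\ptwQsq}$ via Properties \ref{dragprop} and \ref{abstwprop}, and the grading reduction of Section \ref{cyclicgrsec} then makes $\sh_k \simeq \Id$ on $\kR_{n,k}$. The only difference is that the paper works with the presentation $\inp{a,b : a^2 = b^3 = 1}$ rather than $\B_3/\inp{T_3}$, so that both relations become instances of the same full-twist computation (the braid relation being automatic at the level of tangle isotopy); this is an immaterial repackaging.
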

\begin{proof}
Using the presentation  $\pslz = \inp{ a, b : a^2 = 1, b^3 = 1}$ from Section \ref{pslzsec}, we will construct a functor:
$$\pslz \to \Endo(\kR_{n,k}) \conj{where} g\mapsto F_g.$$
Associated to the generators $a$ and $b$ are the chain complexes that are
determined by the tangles:
$$a=\CPic{cubebraid4} \conj{and} b = \CPic{cubebraid3},$$
where each line represents an $n$-cabled strand. There are functors, $F_a,
F_b :\kR_{n,k} \to \kR_{n,k}$, given by tensoring on the left. To the
identity element $1 \in\pslz$, we associate the functor $F_1$ where $1=1_{3n}$ is
the diagram consisting of $3n$ vertical strands. For inverses, $a^{-1}$ and
$b^{-1}$, we use the functors $F_{a}$ and $F_{b^2}$. To a word
$w\cdot w' \in \inp{a,b}$, we associate the functor $F_w\circ F_w'$.  In
this manner, we obtain a functor $F_g : \kR_{n,k} \to \kR_{n,k}$ for each
word $g\in\inp{a,b}$.

We proceed by checking that the relations $a^2 = 1$ and $b^3 = 1$ hold.
Since the braids $a^2$ and $b^3$ are both full twists with some framing
dependency, it suffices to check that this full twist is homotopy equivalent
to the identity object in the category $\kR_{n,k}$.

Consider what happens when the tangles $a^2$ or $b^3$ are glued onto
$E_{n,k}$. As framed tangles, the full twist $T_3$ is isotopic to the tangle
pictured on the right below.

$$\hspace{-1.5in}\CPic{fulltwist1}\quad\quad\simeq\CPic{fulltwist3} \hspace{1.2in}\simeq \quad \left(\pSHIFT\right)^3 \CPic{fulltwist4}$$

Using the relations of Section \ref{algsec}, we can move the small projectors to the top and absorb the top twists. Using the big projector $P_{3n,k}$ at the bottom to absorb the middle twist leaves us with:
$$\left(\pSHIFT\right)^3\left(\pSHIFT\right)^{-1} \CPic{enpic}\quad =\quad \sh_k \CPic{enpic}\quad \simeq \CPic{enpic}\hspace{.1in}.$$
The last equality follows because we have reduced the grading, see Section \ref{cyclicgrsec}.
We have shown that the relations $a^2 = 1$ and $b^3 = 1$ hold up to homotopy.
\end{proof}

The corollary below notes that the above theorem defines representations of
the modular group $\slz$ via pullback.

\begin{corollary}
The group $\slz$ acts on the category $\kR_{n,k}$.
\end{corollary}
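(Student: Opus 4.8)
The plan is to obtain the $\slz$-action not by constructing anything new, but by pulling back the $\pslz$-action of Theorem \ref{pslactsthm} along the central quotient map. First I would recall from Section \ref{pslzsec} that the projective modular group is the quotient $\pslz = \slz/\{I,-I\}$, so that there is a surjective (strict) group homomorphism $\pi : \slz \to \pslz$. By Theorem \ref{pslactsthm} we already have a weak action $F : \pslz \to \Endo(\kR_{n,k})$, that is, an assignment $x \mapsto F_x$ of functors together with homotopy equivalences $F(xy) \simeq F(x)\circ F(y)$ for all $x,y\in\pslz$, as in the definition in Section \ref{groupactsec}.

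Next I would define the candidate $\slz$-action to be the composite $F\circ\pi : \slz \to \Endo(\kR_{n,k})$, sending $g \mapsto F_{\pi(g)}$. Concretely, each element $g\in\slz$ is assigned the functor attached by Theorem \ref{pslactsthm} to its image $\pi(g)$; in particular, since $-I$ maps to the identity of $\pslz$, the two central elements $I$ and $-I$ both act by the identity functor $F_1$ associated to the trivial tangle $1_{3n}$.

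The remaining step is to verify that $F\circ\pi$ satisfies the defining weak-action property. For $g,h\in\slz$ one computes
$$(F\circ\pi)(gh) = F\bigl(\pi(gh)\bigr) = F\bigl(\pi(g)\pi(h)\bigr) \simeq F\bigl(\pi(g)\bigr)\circ F\bigl(\pi(h)\bigr) = (F\circ\pi)(g)\circ(F\circ\pi)(h),$$
where the middle equality uses that $\pi$ is a strict homomorphism and the homotopy equivalence uses that $F$ is a weak action of $\pslz$. I do not expect any genuine obstacle here: because $\pi$ is an honest homomorphism, precomposing a weak action with it introduces no new relations to check, and all the homotopies witnessing $a^2\simeq 1$ and $b^3\simeq 1$ established in the proof of Theorem \ref{pslactsthm} are inherited directly. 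The only point worth recording explicitly is that weak actions are stable under precomposition with strict group homomorphisms, which is precisely what legitimizes the pullback.
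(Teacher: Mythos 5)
Your proposal is correct and is exactly the paper's argument: the paper's proof of this corollary is the one-line pullback $x \mapsto F_{\pi(x)}$ along the quotient $\pi : \slz \to \pslz$. Your additional verification that a weak action precomposed with a strict group homomorphism is again a weak action is a sound (and slightly more careful) spelling-out of the same step.
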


\begin{proof}
To each element $x\in\slz$ associate the functor $F_{\pi(x)}$ defined in the proof
of the theorem where $\pi : \slz\to \pslz$.
\end{proof}

\subsection{Concluding remarks}

\begin{remark}\label{remark3}

  The argument in Theorem \ref{pslactsthm} applies to the
  subcategories $\kX_n = \Kom^*_{\frac{1}{2}}(n,n,n,n)$ of $\Kom^*_{\frac{1}{2}}(4n)$, see \cite[\S
  6]{CK}. The objects of $\kX_n$ are homotopy equivalent to convolutions of
  chain complexes of the form:
\begin{equation}\label{eqn1}
\CPic{nnnndiag}
\end{equation}
where $D$ is a disjoint union of intervals interfacing between four copies
of the projector $P_n$. Reducing the grading so that the functor $\sh_n$
acts by identity gives a new category $\kK_n = \kX_n\sls \sh_n$. After
fixing one projector, act on the other three by $n$-cabled strands as in the
proof above. This is a $\PSL(2,\ZZ)$ representation.

It is possible to argue that the action of $\PSL(2,\ZZ)$ on the category
$\kK_n \ott \FF_2$ is strong by using results from previous papers. This
material has been omitted.

\end{remark}

\begin{remark}
  The objects of the category $\kK_n$ form a basis for the vector space
  assigned to the four punctured sphere $S^2_4$ by $\SU(2)$ Chern-Simons
  theory. The quotient map, $T \to S^2_4$, described in Section
  \ref{pslzsec}, identifies the $\SU(2)$ skein space for $S^2_4$ with the
  $\SO(3)$ skein space for $T^2$ (both with level $r = 2(n+1)$). See
  \cite{BaK, Walker}.
\end{remark}

\begin{remark}
Spin networks of the form (\ref{eqn1}) correspond to $\quantsl$-equivariant
maps from the $n$th irreducible representation $V_n$ to $V_n^{\otimes
  3}$. The dimension of this space is $n+1$. We may view the categories
$\kK_n$ as a sequence of representations of increasing complexity.
\end{remark}

\begin{remark}
  It might be interesting to study the groups $K_0(\kK_n)$ and
  $K_0(\kR_{n,k})$. The groups $K_0(\kP_{n,k})$ and $K_0(\kX_n)$ are
  determined in \cite{CH,CK}, but these groups are not the same because the
  identity $\sh_k \cong \Id$ implies that there must be other relations.
\end{remark}

\begin{remark}
  Tensoring the decomposition of identity in Equation \eqref{decompeq} with
  a chain complex representing the full twist and applying Property
  \ref{abstwprop} shows that the projectors in Theorem \ref{hothm}
  diagonalize the action of the center $\kZ(\B_n)$ on
  $\Kom^*_{\frac{1}{2}}(n)$. In order to reduce an arbitrary $G$-action to a
  $G/\kZ(G)$-action in a more general setting, one must first find an
  extension of $\eC$ in which the action of the center $\kZ(G)$ respects the
  grading. Some results predict such decompositions for other braid group
  representations related to knot homologies \cite[Thm 1.1]{Gorsky}.
\end{remark}

\end{document}